\def\BibTeX{{\rm B\kern-.05em{\sc i\kern-.025em b}\kern-.08em
    T\kern-.1667em\lower.7ex\hbox{E}\kern-.125emX}}
\newtheorem{thm}{Theorem}[section]
\newtheorem{lem}[thm]{Lemma}
\theoremstyle{definition}
\theoremstyle{remark}
\numberwithin{equation}{section}
    \newcommand{\floor}[1]{\lfloor#1\rfloor}
    \newcommand{\EE}{\mathbb{E}}
    \renewcommand{\Pr}{\operatorname{P}}
    \newcommand{\dto}{\xrightarrow{d}}
    \newcommand{\vto}{\xrightarrow{v}}
    \newcommand{\rmd}{\mathrm{d}}
\newcommand{\PP}{\mathrm{P}}
\newcommand{\be}{\begin{equation}}
    \newcommand{\ee}{\end{equation}}
\begin{document}

\title[A note on joint convergence of partial sum and maxima for linear processes] 
{A note on joint functional convergence of partial sum and maxima for linear processes}

%
\author{Danijel Krizmani\'{c}}

\address{Danijel Krizmani\'{c}\\ Department of Mathematics\\
        University of Rijeka\\
        Radmile Matej\v{c}i\'{c} 2, 51000 Rijeka\\
        Croatia}
\email{dkrizmanic@math.uniri.hr}



\subjclass[2010]{Primary 60F17; Secondary 60G52}
\keywords{Functional limit theorem, Regular variation, Stable L\'{e}vy process, Extremal process, $M_{2}$ topology, Linear process}


\begin{abstract}
Recently, for the joint partial sum and partial maxima processes constructed from linear processes with independent identically distributed innovations that are regularly varying with tail index $\alpha \in (0, 2)$, a functional limit theorem with the Skorohod weak $M_{2}$ topology has been obtained. In this paper we show that, if all the coefficients of the linear processes are of the same sign, the functional convergence holds in the stronger topology, i.e. in the Skorohod weak $M_{1}$ topology on the space of $\mathbb{R}^{2}$--valued c\`{a}dl\`{a}g functions on $[0, 1]$.
\end{abstract}

\maketitle

\section{Introduction}
\label{intro}

Let $(Z_{i})_{i \in \mathbb{Z}}$ be a sequence of i.i.d. random variables. A linear process with innovations $(Z_{i})$ is
a stochastic process of the form
$$ X_{i} = \sum_{j=-\infty}^{\infty}\varphi_{j}Z_{i-j}, \qquad i \in \mathbb{Z},$$
where the constants $\varphi_{j}$ are such that the above series is a.s. convergent. One sufficient condition for that, in the case when $Z_{i}$ is regularly varying with index of regular variation $\alpha >0$, is
\begin{equation}\label{e:asconv}
 \sum_{j=-\infty}^{\infty}|\varphi_{j}|^{\delta} < \infty \quad \textrm{for some} \ 0 < \delta < \alpha,\,\delta \leq 1,
 \end{equation}
 see Theorem 2.1 in Cline~\cite{Cl83} or Resnick~\cite{Re87}, Section 4.5.
 
The literature is very rich with applications of linear processes
in statistical analysis and time series modeling. We refer to Brockwell and Davis~\cite{BD96} for an introduction to the topic.

In this paper we deal with linear processes with heavy-tailed innovations, i.e. we assume $Z_{i}$ is regularly varying with index of regular variation $\alpha \in (0,2)$. In particular, this means that
$$ \Pr(|Z_{i}| > x) = x^{-\alpha} L(x), \qquad x>0,$$
where $L$ is a slowly varying function at $\infty$. Regular
variation of $Z_{i}$ can be expressed in terms of
vague convergence of measures on $\EE = \overline{\mathbb{R}} \setminus \{0\}$:
\begin{equation}
  \label{eq:onedimregvar}
  n \Pr( a_n^{-1} Z_i \in \cdot \, ) \vto \mu( \, \cdot \,) \qquad  \textrm{as} \ n \to \infty,
\end{equation}
where $(a_{n})$ is a sequence of positive real numbers such that
\be\label{eq:niz}
n \Pr (|Z_{1}|>a_{n}) \to 1,
\ee
as $n \to \infty$, and the measure $\mu$ on $\EE$ is given by
\begin{equation}
\label{eq:mu}
  \mu(\rmd x) = \bigl( p \, 1_{(0, \infty)}(x) + r \, 1_{(-\infty, 0)}(x) \bigr) \, \alpha |x|^{-\alpha-1} \, \rmd x,
\end{equation}
where
\be\label{eq:pq}
p =   \lim_{x \to \infty} \frac{\Pr(Z_i > x)}{\Pr(|Z_i| > x)} \qquad \textrm{and} \qquad
  r =   \lim_{x \to \infty} \frac{\Pr(Z_i < -x)}{\Pr(|Z_i| > x)}.
\ee
When $\alpha \in (1,2)$ it holds that $\mathrm{E}(Z_{1}) < \infty$.



Suppose the coefficients $\varphi_{j}$ are all of the same sign, and that $Z_{1}$ is symmetric when $\alpha =1$. Assume also $p>0$ if the coefficients $\varphi_{j}$ are non-negative, and $r>0$ if these coefficients are non-positive. Put $ \beta = \sum_{i=-\infty}^{\infty}\varphi_{i}$ and $\gamma = \max\{|\varphi_{i}| : i \in \mathbb{Z}\}>0$. Condition (\ref{e:asconv}) implies $\beta$ is finite. By Theorem 4.1 in Krizmani\'{c}~\cite{Kr17-2} we have, as $n \to \infty$,
\begin{equation}\label{e:convM2}
L_{n}(\,\cdot\,) := (V_{n}(\,\cdot\,), W_{n}(\,\cdot\,)) \dto (\beta V(\,\cdot\,), \gamma W(\,\cdot\,))
\end{equation}
in $D([0,1], \mathbb{R}^{2})$ endowed with the weak $M_{2}$ topology, where
\be\label{eq:defVn}
V_{n}(t) = \frac{1}{a_{n}} \Bigg( \sum_{i=1}^{\floor {nt}}X_{i} - \floor {nt}b_{n}\Bigg), \qquad W_{n}(t)= \frac{1}{a_{n}} \bigvee_{i=1}^{\floor {nt}}X_{i}, \qquad t \in [0,1],
\ee
 with $(a_n)$ as in~\eqref{eq:niz} and
$$ b_{n} = \left\{ \begin{array}{cc}
                                   0, & \quad \alpha \in (0,1]\\
                                   \beta \mathrm{E}(Z_{1}), & \quad \alpha \in (1,2)
                                 \end{array}\right.,$$
 $V(\,\cdot\,)$ is an $\alpha$--stable L\'{e}vy process and $W(\,\cdot\,)$ is an extremal process.
The purpose of this paper is to strength the convergence in (\ref{e:convM2}) to convergence with respect to the stronger $M_{1}$ topology. We will use the famous compactness approach, i.e. the "finite-dimensional convergence plus tightness" procedure, developed in detail in Billingsley~\cite{Bi68}.

The paper is organized as follows. In Section~\ref{S:M2top} we recall Skorohod's $M_{1}$and $M_{2}$ topologies. Section~\ref{S:Weakconv} is devoted to a short description of weak convergence theory for the space $D([0,1], \mathbb{R}^{2})$, and in Section~\ref{S:Mainresult} we state and prove the our main result.

\section{Skorohod topologies}\label{S:M2top}

We start with a definition of the Skorohod $M_{1}$ topology in the univariate case, i.e. on the space $D([0,1], \mathbb{R})$ of c\`{a}dl\`{a}g functions from $[0,1]$ to $\mathbb{R}$.
For $x \in D([0,1],
\mathbb{R})$ the completed graph of $x$ is the set
\[
  \Gamma_{x}
  = \{ (t,z) \in [0,1] \times \mathbb{R}: z \in [x(t-), x(t)]\},
\]
where $x(t-)$ is the left limit of $x$ at $t$. We define an
order on the graph $\Gamma_{x}$ by saying that $(t_{1},z_{1}) \le
(t_{2},z_{2})$ if either (i) $t_{1} < t_{2}$ or (ii) $t_{1} = t_{2}$
and $|x(t_{1}-) - z_{1}| \le |x(t_{2}-) - z_{2}|$. A parametric representation
of the graph $\Gamma_{x}$ is a continuous nondecreasing function $(r,u)$
mapping $[0,1]$ onto $\Gamma_{x}$, with $r$ being the
time component and $u$ being the spatial component. Let $\Pi(x)$ denote the set of all
parametric representations of the graph $\Gamma_{x}$. For $x_{1},x_{2}
\in D([0,1], \mathbb{R})$ define
\[
  d_{M_{1}}(x_{1},x_{2})
  = \inf \{ \|r_{1}-r_{2}\|_{[0,1]} \vee \|u_{1}-u_{2}\|_{[0,1]} : (r_{i},u_{i}) \in \Pi(x_{i}), i=1,2 \},
\]
where $\|x\|_{[0,1]} = \sup \{ \|x(t)\| : t \in [0,1] \}$. This definition introduces $d_{M_{1}}$ as a metric
on $D([0,1], \mathbb{R})$, and the induced topology is called Skorohod $M_{1}$ topology. It is weaker
than the more frequently used Skorohod $J_{1}$ topology.


If we replace $\Gamma_{x}$ above with
$$G_{x} = \{ (t,z) \in [0,1] \times \mathbb{R}^{2} : z \in [[x(t-), x(t)]]\},$$
where  $[[a,b]]$ is the product segment, i.e.
$[[a,b]]=[a_{1},b_{1}] \times [a_{2},b_{2}]$
for $a=(a_{1}, a_{2}), b=(b_{1}, b_{2}) \in
\mathbb{R}^{2}$, and as parametric representations
of the graph $G_{x}$ we take continuous nondecreasing functions $(r,u)$
mapping $[0,1]$ into $G_{x}$ such that $r(0)=0,
r(1)=1$ and $u(1)=x(1)$, then we obtain the so-called weak $M_{1}$ topology on $D([0,1], \mathbb{R}^{2})$. This topology is weaker than the standard $M_{1}$ topology, but it coincides with the product topology, which is the appropriate topology on $D([0,1], \mathbb{R}^{2})$ for our considerations. The product topology is induced by the metric
\begin{equation}\label{e:defdp}
 d_{p}(x_{1},x_{2})= \max \{ d_{M_{1}}(x_{1j},x_{2j}) : j=1,2 \}
\end{equation}
 for $x_{i}=(x_{i1}, x_{i2}) \in D([0,1],
 \mathbb{R}^{2})$ and $i=1,2$.
For detailed discussion of the $M_{1}$ topologies we refer to
Whitt~\cite{Whitt02}, sections 12.3--12.5.

Recall here also the Skorohod weak $M_{2}$ topology on $D([0,1], \mathbb{R}^{2})$. It is induced by the metric
\begin{equation}\label{e:defdpM2}
 d_{p,M_{2}}(x_{1},x_{2})= \max_{j=1,2}d_{M_{2}}(x_{1j},x_{2j})
\end{equation}
 for $x_{i}=(x_{i1}, x_{i2}) \in D([0,1],
 \mathbb{R}^{2})$, $i=1,2$, where
 $$ d_{M_{2}}(y_{1}, y_{2}) = \bigg(\sup_{a \in \Gamma_{y_{1}}} \inf_{b \in \Gamma_{y_{2}}} d(a,b) \bigg) \vee \bigg(\sup_{a \in \Gamma_{y_{2}}} \inf_{b \in \Gamma_{y_{1}}} d(a,b) \bigg), \quad y_{1},y_{2} \in D([0,1], \mathbb{R}),$$
is the Hausdorff metric on the spaces of graphs, and $d$ is the metric on $\mathbb{R}^{2}$ defined by $d(a,b)=|a_{1}-b_{1}| \vee |a_{2}-b_{2}|$ for $a=(a_{1},a_{2}), b=(b_{1},b_{2}) \in \mathbb{R}^{2}$ (see
Whitt~\cite{Whitt02}, sections 12.10--12.11).

\section{Weak convergence in $D([0,1], \mathbb{R}^{d})$ for $d=1$ and $d=2$}
\label{S:Weakconv}

It is well known that the space $D([0,1], \mathbb{R})$ equipped with the Skorohod $J_{1}$ topology is a Polish space (i.e. metrizable as a complete separable metric space), see for example Billingsley~\cite{Bi68}, Chapter 3. The same holds for the $M_{1}$ topology, since it is topologically complete (see Whitt~\cite{Whitt02}, Section 12.8) and separability remains preserved in the weaker topology.

The standard procedure of proving weak convergence of stochastic processes is to prove convergence of finite-dimensional distributions and relative compactness. For Polish spaces, by Prohorov theorem (see Prohorov~\cite{Pr56}) tightness is necessary and sufficient for relative compactness.

Since our stochastic processes have discontinuities, we will require convergence of the finite-dimensional distributions only for time points that are a.s. continuity points of the limit. For $x \in D([0,1], \mathbb{R}^{d})$ let $\textrm{Disc}(x)$ be the set of discontinuity points of $x$. For a stochastic process $Y$ let
$$ T_{Y} = \{ t \in (0,1] : \Pr (t \in \textrm{Disc}(Y))=0 \} \cup \{1 \}.$$

Now we state the criteria for convergence in distribution in $D([0,1], \mathbb{R})$ equipped with Skorohod $M_{1}$ topology based on Theorem 11.6.6 in Whitt~\cite{Whitt02} (see also theorems 3.2.1 and 3.2.2 in Skorohod~\cite{Sk56}). Let $(Y_{n})$ be a sequence of random elements of $D([0,1], \mathbb{R})$.

\begin{thm}\label{t:Whitt}
There is convergence in distribution $Y_{n} \dto Y$ in $D([0,1], \mathbb{R})$ with the $M_{1}$ topology if and only if
\begin{itemize}
\item[(1)] $ (Y_{n}(t_{1}), \ldots, Y_{n}(t_{k}) \to (Y(t_{1}), \ldots, Y(t_{k}))$ in $\mathbb{R}^{k}$,
for all positive integers $k$ and $t_{1}, \ldots, t_{k} \in T_{Y}$ such that $0 \leq t_{1} < t_{2} < \ldots < t_{k} \leq 1$,
\item[(2)] the sequence $(X_{n})$ is tight (with respect to the $M_{1}$ topology).
\end{itemize}
\end{thm}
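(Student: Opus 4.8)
The plan is to obtain both implications from the standard Prohorov ``tightness plus finite-dimensional convergence'' scheme, exploiting two facts: that $(D([0,1],\RR),M_1)$ is Polish (recorded in Section~\ref{S:Weakconv}), and the one feature specific to the $M_1$ topology, namely that the coordinate evaluation maps are continuous exactly at the continuity points of their argument.

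Consider first the necessity of~(1) and~(2). If $Y_n \dto Y$, then the convergent sequence is relatively compact, and since the space is Polish, Prohorov's theorem gives tightness, which is~(2). For~(1), fix $t_1 < \dots < t_k$ in $T_Y$ and let $\pi_{t_1,\dots,t_k}\colon D([0,1],\RR)\to\RR^k$ be the projection $x\mapsto(x(t_1),\dots,x(t_k))$. In the $M_1$ topology this map is continuous at every $x$ with $t_1,\dots,t_k\notin\textrm{Disc}(x)$ (Whitt~\cite{Whitt02}, Section~12.4). Because each $t_i\in T_Y$, the set of discontinuities of $\pi_{t_1,\dots,t_k}$ is null for the law of $Y$, so the continuous mapping theorem yields $(Y_n(t_1),\dots,Y_n(t_k))\dto(Y(t_1),\dots,Y(t_k))$, i.e.~(1).

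For sufficiency, assume~(1) and~(2). Tightness and Prohorov's theorem make $(Y_n)$ relatively compact, so any subsequence has a further subsequence $(Y_{n'})$ with $Y_{n'}\dto Y'$ for some random element $Y'$ of $D([0,1],\RR)$; it then suffices to prove $Y'\eqd Y$, as relative compactness together with uniqueness of the subsequential limit forces the full convergence $Y_n\dto Y$. Put $S:=T_Y\cap T_{Y'}$. Each of $Y$ and $Y'$ has at most countably many fixed discontinuities, so $S$ is co-countable in $(0,1]$, hence dense, and $1\in S$. Applying the continuity argument of the previous paragraph to $Y_{n'}\dto Y'$ at points of $S$ gives $(Y_{n'}(t_1),\dots,Y_{n'}(t_k))\dto(Y'(t_1),\dots,Y'(t_k))$ for all $t_1<\dots<t_k$ in $S$; comparing with~(1), the finite-dimensional laws of $Y$ and $Y'$ coincide on $S$.

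It remains to upgrade this to equality of laws. Since $S$ is dense in $[0,1]$ and contains the right endpoint $1$, the coordinate projections $\{\pi_t:t\in S\}$ generate the Borel $\sigma$-field of $D([0,1],\RR)$, so a probability measure on this space is determined by its finite-dimensional distributions on $S$; consequently $Y'\eqd Y$, which completes the argument. The genuinely delicate ingredients are the two topological inputs stated at the outset — the $M_1$-continuity of the evaluations at continuity points and the generation of the Borel $\sigma$-field by projections over a dense set — and it is precisely the former that forces condition~(1) to be restricted to the time points in $T_Y$; the rest is the routine compactness bookkeeping.
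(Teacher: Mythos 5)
Your proof is correct, but note that the paper does not actually prove this theorem: it states it as a known criterion, citing Theorem 11.6.6 of Whitt~\cite{Whitt02} and Skorohod~\cite{Sk56}, and the only argument offered is the one-line remark that necessity follows from the space being Polish --- which is exactly your first paragraph. Your sufficiency argument (Prohorov's theorem plus identification of subsequential limits through their finite-dimensional laws) is a genuine reconstruction, and its two nontrivial ingredients are precisely the facts the paper isolates later for a different purpose: the a.s.\ $M_{1}$-continuity of coordinate projections at points of $T_{Y}$ (Lemma~\ref{l:contprojection}) and the fact that projections over a dense subset of $[0,1]$ containing $1$ generate the $M_{1}$ Borel $\sigma$-field (the lemma following it, proved there by comparison with the $J_{1}$ $\sigma$-field). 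In the paper these lemmas are established in the bivariate setting in order to run the Ferger--Vogel scheme for Theorem~\ref{t:M1weak}; your argument is in effect the univariate specialization of that same scheme, and what it buys is a self-contained proof of Theorem~\ref{t:Whitt} where the paper relies on citation. One point worth making explicit in your last step: the cylinder sets over $S$ form a $\pi$-system, which is what allows agreement of the two laws on them to propagate to the generated $\sigma$-field.

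There is one small imprecision, which you share with the paper's own Lemma~\ref{l:contprojection}: by definition $1 \in T_{Y}$ even when $\Pr(1 \in \textrm{Disc}(Y)) > 0$, so when $t_{k}=1$ you cannot conclude that the discontinuity set of $\pi_{t_{1},\ldots,t_{k}}$ is null for the law of $Y$ from the principle that evaluations are continuous \emph{exactly} at continuity points of their argument. What saves the argument is that $\pi_{1}$ is continuous on all of $D([0,1],\mathbb{R})$ in the $M_{1}$ topology, since every parametric representation $(r,u)$ of a completed graph is required to satisfy $u(1)=x(1)$. With that one-line observation the continuous mapping step is airtight at the endpoint as well.
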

Necessity in the above theorem follows from the fact that the space is Polish.
Now we turn our attention to the space $D([0,1], \mathbb{R}^{2})$ endowed with the weak $M_{1}$ topology. Since this topology coincides with the product topology induced by the metric $d_{p}$ in (\ref{e:defdp}), repeating the arguments from Ferger and Vogel in~\cite{FeVo10} where they developed a convergence theory for the Skorohod product space with the $J_{1}$ topology, the following result follows.

\begin{thm}\label{t:M1weak}
Let $(Y_{n}, Q_{n})$ be a sequence of random elements in $D([0,1], \mathbb{R}^{2})$. If
\begin{itemize}
\item[(1)] the sequences $(Y_{n})$ and $(Q_{n})$ are tight with respect to the $M_{1}$ topology,
\item[(2)] there is a random element $(Y,Q)$ in $D([0,1], \mathbb{R}^{2})$ such that, as $n \to \infty$,
\begin{equation}\label{e:findimconv}
   (Y_{n}(t_{1}), \ldots, Y_{n}(t_{k}), Q_{n}(t_{1}), \ldots, Q_{n}(t_{k})) \to (Y(t_{1}), \ldots, Y(t_{k}), Q(t_{1}), \ldots, Q(t_{k}))
\end{equation}
  for all $k \in \mathbb{N}$ and $t_{1}, \ldots, t_{k} \in T_{Y} \cap T_{Q}$,
\end{itemize}
then $(Y_{n}, Q_{n}) \dto (Y,Q)$ with respect to the weak $M_{1}$ topology.
\end{thm}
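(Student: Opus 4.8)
The plan is to follow the classical ``relative compactness plus identification of the limit'' scheme, now carried out in the product space. Because the weak $M_{1}$ topology coincides with the product topology generated by the metric $d_{p}$ in \eqref{e:defdp}, and because $(D([0,1],\mathbb{R}), M_{1})$ is Polish (Whitt~\cite{Whitt02}, Section 12.8), the space $D([0,1],\mathbb{R}^{2})$ with the weak $M_{1}$ topology is itself Polish, being a finite product of Polish spaces. This is exactly the setting in which the Ferger--Vogel machinery developed for $J_{1}$ in~\cite{FeVo10} transfers verbatim, the only topology-specific inputs being measurability of the coordinate projections together with their continuity at continuity points, both of which hold for $M_{1}$ just as for $J_{1}$.

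First I would upgrade the marginal tightness in hypothesis~(1) to joint tightness. Given $\varepsilon > 0$, pick $M_{1}$-compact sets $K_{1}, K_{2} \subset D([0,1],\mathbb{R})$ with $\inf_{n}\Pr(Y_{n} \in K_{1}) \geq 1 - \varepsilon/2$ and $\inf_{n}\Pr(Q_{n} \in K_{2}) \geq 1 - \varepsilon/2$; then $K_{1} \times K_{2}$ is compact in the product space, and a union bound gives $\inf_{n}\Pr((Y_{n},Q_{n}) \in K_{1} \times K_{2}) \geq 1 - \varepsilon$. Hence $(Y_{n},Q_{n})$ is tight, and since the product space is Polish, Prohorov's theorem (Prohorov~\cite{Pr56}) yields relative compactness: every subsequence of $(Y_{n},Q_{n})$ has a further subsequence converging in distribution to some random element of $D([0,1],\mathbb{R}^{2})$.

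The heart of the argument is to show that every such subsequential limit $(\widetilde{Y}, \widetilde{Q})$ has the same law as $(Y,Q)$; the theorem then follows because a relatively compact sequence with a single limit point converges to it. Fix time points $t_{1} < \cdots < t_{k}$ lying in $T_{Y} \cap T_{Q} \cap T_{\widetilde{Y}} \cap T_{\widetilde{Q}}$. For such points the projection $\pi_{t_{1},\ldots,t_{k}} \colon D([0,1],\mathbb{R}^{2}) \to \mathbb{R}^{2k}$ is continuous on a set of full measure under the law of $(\widetilde{Y},\widetilde{Q})$ (under $M_{1}$, evaluation at $t$ is continuous at every path for which $t$ is a continuity point, and at $t=1$ it is continuous at every path, since parametric representations satisfy $u(1)=x(1)$). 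The continuous mapping theorem applied along the convergent subsequence then gives $(Y_{n'}(t_{1}),\ldots,Q_{n'}(t_{k})) \dto (\widetilde{Y}(t_{1}),\ldots,\widetilde{Q}(t_{k}))$, while hypothesis~(2) gives $(Y_{n}(t_{1}),\ldots,Q_{n}(t_{k})) \dto (Y(t_{1}),\ldots,Q(t_{k}))$ for the full sequence; comparing the two forces $(\widetilde{Y}(t_{1}),\ldots,\widetilde{Q}(t_{k})) \eqd (Y(t_{1}),\ldots,Q(t_{k}))$.

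Finally I would note that each of the four sets $T_{Y}, T_{Q}, T_{\widetilde{Y}}, T_{\widetilde{Q}}$ has at most countable complement in $[0,1]$ --- a standard property of c\`{a}dl\`{a}g random elements, holding because jumps exceeding a fixed size cannot accumulate, so the set of fixed discontinuity times is countable (Billingsley~\cite{Bi68}) --- whence their intersection is dense in $[0,1]$ and contains $1$. Since the Borel $\sigma$-field of the weak $M_{1}$ topology is generated by the coordinate projections over any dense set of times containing $1$, the finite-dimensional agreement just established determines the law, giving $(\widetilde{Y}, \widetilde{Q}) \eqd (Y,Q)$ and completing the proof. The step I expect to require the most care is this identification: one must apply the evaluation functionals only at common continuity points of the \emph{subsequential} limit, not merely of $(Y,Q)$, which is precisely why the co-countability of all four $T$-sets, rather than of $T_{Y} \cap T_{Q}$ alone, is needed.
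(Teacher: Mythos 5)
Your proof is correct and takes essentially the same route as the paper: the paper outsources the tightness-plus-identification skeleton to Ferger and Vogel~\cite{FeVo10} and verifies only the two $M_{1}$-specific ingredients---a.s. continuity of the finite-dimensional projections at times in the $T$-sets (Lemma~\ref{l:contprojection}) and generation of the Borel $\sigma$-field $\mathcal{D}_{M_{1}}$ by projections over a dense set containing $1$---which are exactly the two facts your argument rests on. The only difference is that you spell out the Prohorov/subsequential-limit argument explicitly (including the correct care in evaluating only at continuity points of the \emph{subsequential} limit, justified by co-countability of all four $T$-sets), whereas the paper cites~\cite{FeVo10} for that skeleton.
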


In~\cite{FeVo10} this result was proven for the $J_{1}$ topology using five lemmas. Three of these lemmas hold trivially in our case, but two of them (Lemma 5.4 and Lemma 5.7) have to be checked with respect to the $M_{1}$ topology, and this is accomplished in the following two lemmas.
Let $T=\{t_{1}, \ldots, t_{k}\}$ and $S=\{s_{1}, \ldots, s_{l}\}$ where $0 \leq t_{1} < t_{2} < \ldots < t_{k} \leq 1$ and $0 \leq s_{1} < s_{2} < \ldots < s_{l} \leq 1$. Define the projections $\pi_{T}$ from $D([0,1], \mathbb{R})$ to $\mathbb{R}^{k}$ and $\pi_{T, S}$ from $D([0,1], \mathbb{R}^{2})$ to $\mathbb{R}^{k+l}$ by
$$ \pi_{T}(x) = (x(t_{1}), \ldots, x(t_{k})), \qquad \pi_{T, S}(x,y) = (x(t_{1}), \ldots, x(t_{k}), y(s_{1}), \ldots, y(s_{l})).$$

\begin{lem}\label{l:contprojection}
If $T \subseteq T_{Y} \cap T_{Q}$, $T$ finite, then $\pi_{T, T}$ is a.s. continuous with respect to the distribution of $(Y,Q)$.
\end{lem}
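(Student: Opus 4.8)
The plan is to exploit the fact, recalled in Section~\ref{S:M2top}, that the weak $M_{1}$ topology on $D([0,1],\mathbb{R}^{2})$ coincides with the product topology induced by the metric $d_{p}$ in~\eqref{e:defdp}. Writing $\mathrm{pr}_{1},\mathrm{pr}_{2}\colon D([0,1],\mathbb{R}^{2})\to D([0,1],\mathbb{R})$ for the two coordinate maps, which are continuous for the product topology, we have $\pi_{T,T}(x,y)=\bigl(\pi_{T}(\mathrm{pr}_{1}(x,y)),\pi_{T}(\mathrm{pr}_{2}(x,y))\bigr)$. Hence $\pi_{T,T}$ is continuous at a point $(x,y)$ as soon as the one-dimensional projection $\pi_{T}$ is continuous at $x$ and at $y$ in the $M_{1}$ topology on $D([0,1],\mathbb{R})$. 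This reduces the whole statement to the scalar case.

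First I would establish the scalar fact: for $t\in(0,1]$ the map $\pi_{t}\colon (D([0,1],\mathbb{R}),M_{1})\to\mathbb{R}$, $\pi_{t}(x)=x(t)$, is continuous at every $x$ with $t\notin\mathrm{Disc}(x)$, and $\pi_{1}$ is continuous everywhere (see Whitt~\cite{Whitt02}, Section~12.5); the general $\pi_{T}$ then inherits continuity at every $x$ all of whose coordinates $t_{i}$ are continuity points. To argue it directly, suppose $d_{M_{1}}(x_{n},x)\to 0$ and fix a continuity point $t$ of $x$. Choose parametric representations $(r_{n},u_{n})\in\Pi(x_{n})$ and $(r,u)\in\Pi(x)$ with $\|r_{n}-r\|_{[0,1]}\vee\|u_{n}-u\|_{[0,1]}\to 0$, and set $s_{n}=\max\{s\in[0,1]:r_{n}(s)=t\}$. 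Since $(r_{n},u_{n})$ is nondecreasing for the order on $\Gamma_{x_{n}}$ and the maximal graph point over time $t$ is $(t,x_{n}(t))$, one has $u_{n}(s_{n})=x_{n}(t)$. Passing to an arbitrary subsequence along which $s_{n}\to s^{\ast}$, uniform convergence gives $r(s^{\ast})=\lim r_{n}(s_{n})=t$; because $t$ is a continuity point of $x$ the fibre $\{z:(t,z)\in\Gamma_{x}\}$ is the single point $x(t)$, so $u(s^{\ast})=x(t)$ and therefore $x_{n}(t)=u_{n}(s_{n})\to u(s^{\ast})=x(t)$. As every subsequence admits a further subsequence along which $x_{n}(t)\to x(t)$, the whole sequence converges.

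The key technical point, and the step I expect to be the main obstacle, is precisely the choice $s_{n}=\max r_{n}^{-1}(t)$: a merely nearby parameter would control only some value in the segment $[x_{n}(t-),x_{n}(t)]$, and since $x_{n}$ may jump at $t$ even though the limit $x$ does not, one must select the parameter landing on $x_{n}(t)$ itself. The monotonicity of the parametric representation for the graph order, together with the identification of $(t,x_{n}(t))$ as the maximal graph point over time $t$, is what legitimizes this selection; the endpoint case $t=1$ needs no continuity assumption because $u(1)=x(1)$ for every admissible representation.

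Finally I would pass from continuity points to the almost sure statement. By the previous paragraphs $\pi_{T,T}$ is continuous at $(x,y)$ whenever every $t_{i}\in T$ is a continuity point of both $x$ and $y$, so it suffices to show $\Pr\bigl(t_{i}\notin\mathrm{Disc}(Y)\cup\mathrm{Disc}(Q)\ \text{for all}\ i\bigr)=1$. Since $T\subseteq T_{Y}\cap T_{Q}$, each $t_{i}$ lies in $(0,1]$; for $t_{i}\in(0,1)$ the definitions of $T_{Y}$ and $T_{Q}$ give $\Pr(t_{i}\in\mathrm{Disc}(Y))=\Pr(t_{i}\in\mathrm{Disc}(Q))=0$, while $t_{i}=1$ is always a continuity point of the projection. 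A union bound over the finitely many points of $T$ then yields probability one, so $(Y,Q)$ a.s.\ lies in the continuity set of $\pi_{T,T}$, which is the assertion.
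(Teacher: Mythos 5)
Your proof is correct, and it reaches the same final union bound as the paper, but it handles the key continuity step by a genuinely different, more self-contained route. The paper's entire argument for continuity of $\pi_{T,T}$ at points $z=(x,y)$ with $T\subseteq[\mathrm{Disc}(z)]^{c}$ is a citation of Lemma 12.5.1 in Whitt~\cite{Whitt02} ($M_{1}$ convergence implies local uniform convergence at continuity points of the limit), after which it concludes exactly as you do: $z\in\mathrm{Disc}(\pi_{T,T})$ forces some $t\in T$ to lie in $\mathrm{Disc}(z)$, and finiteness of $T$ plus $T\subseteq T_{Y}\cap T_{Q}$ gives probability zero. You instead reprove the relevant special case of Whitt's lemma from the definition of the $M_{1}$ metric: the reduction to the scalar case via the product topology, and then the maximal-parameter selection $s_{n}=\max r_{n}^{-1}(t)$, which correctly identifies $u_{n}(s_{n})=x_{n}(t)$ because $(t,x_{n}(t))$ is the maximal graph point in the fibre over $t$ and parametric representations are monotone for the graph order. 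This buys transparency --- your argument exhibits precisely why the projection can only fail to be continuous at a jump time of the limit --- at the cost of length, whereas the paper's proof is three lines but opaque without Whitt's book at hand. One small caveat in your write-up: choosing a \emph{single} representation $(r,u)\in\Pi(x)$ of the limit together with $(r_{n},u_{n})\in\Pi(x_{n})$ satisfying $\|r_{n}-r\|_{[0,1]}\vee\|u_{n}-u\|_{[0,1]}\to 0$ is not immediate from the infimum defining $d_{M_{1}}$ (a priori the representation of $x$ realizing the near-infimum varies with $n$); it is, however, a standard characterization of $M_{1}$ convergence (Whitt~\cite{Whitt02}, Section 12.5), and in any case your argument survives with $n$-dependent representations of $x$, since continuity of $x$ at $t$ makes every graph point of $x$ with time coordinate near $t$ lie near $x(t)$. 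So this is a presentational point, not a gap.
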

\begin{proof}
Since $M_{1}$ convergence implies local uniform convergence at all continuity points (see Lemma 12.5.1 in Whitt~\cite{Whitt02}), it follows that $\pi_{T,T}$ is continuous at $z=(x,y) \in D([0,1], \mathbb{R}^{2})$ if $T \subseteq [\textrm{Disc}(z)]^{c}$. Hence $z \in \textrm{Disc}(\pi_{T,T})$ implies the existence of $t \in T$ such that $t \in \textrm{Disc}(z)$. Since $T \subseteq T_{Y} \cap T_{Q}$ we have
$$ \Pr [(Y,Q) \in \textrm{Disc}(\pi_{T,T}) ] \leq \sum_{t \in T} \Pr[t \in \textrm{Disc}((Y,Q))]  =0.$$
\end{proof}

For any $T_{0} \subseteq [0,1]$ let
$$ \mathcal{F}(T_{0}) = \{ \pi_{T}^{-1}(A) : A \in \mathcal{B}(\mathbb{R}^{|T|}), T \subseteq T_{0}, |T| < \infty \},$$
where $\mathcal{B}(\mathbb{R}^{k})$ is the class of Borel sets in $\mathbb{R}^{k}$ and $|T|$ is the cardinal number of $T$.

\begin{lem}
If $T_{0}$ is dense in $[0,1]$ and contains $1$, then $\mathcal{F}(T_{0})$ generates $\mathcal{D}_{M_{1}}$, the $\sigma$--field of Borel sets for the $M_{1}$ topology.
\end{lem}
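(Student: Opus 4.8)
The plan is to show that $\mathcal{F}(T_0)$ is a field whose generated $\sigma$-field equals $\mathcal{D}_{M_1}$, by sandwiching it between the two natural candidates: on one hand every set in $\mathcal{F}(T_0)$ is Borel for the $M_1$ topology, so $\sigma(\mathcal{F}(T_0)) \subseteq \mathcal{D}_{M_1}$; on the other hand, I must produce enough $M_1$-Borel sets inside $\sigma(\mathcal{F}(T_0))$ to force the reverse inclusion. The forward inclusion is the easy direction: each projection $\pi_T$ is measurable with respect to $\mathcal{D}_{M_1}$ (indeed, by Lemma~\ref{l:contprojection} and Lemma 12.5.1 in Whitt~\cite{Whitt02}, evaluation maps $x \mapsto x(t)$ are $M_1$-Borel measurable since they are continuous off a measurable set), so $\pi_T^{-1}(A) \in \mathcal{D}_{M_1}$ for every Borel $A$, giving $\sigma(\mathcal{F}(T_0)) \subseteq \mathcal{D}_{M_1}$ immediately.

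For the reverse inclusion the key observation is that the $M_1$ topology is separable, so $\mathcal{D}_{M_1}$ is generated by any countable base, and in particular it suffices to show that every open $d_{M_1}$-ball lies in $\sigma(\mathcal{F}(T_0))$. I would reduce this to showing that the metric itself, or at least the map $x \mapsto d_{M_1}(x, x_0)$ for fixed $x_0$, is $\sigma(\mathcal{F}(T_0))$-measurable. The natural route is to exhibit a countable dense subset $T_0' \subseteq T_0$ and express $d_{M_1}$ as a limit (or countable supremum/infimum) of functions that depend on $x$ only through finitely many coordinates $x(t)$, $t \in T_0$. Concretely, because $M_1$ convergence is equivalent to convergence of the completed graphs in the Hausdorff sense, and the completed graph of a c\`adl\`ag function is determined by its values on any dense set together with the terminal value at $1$ (which is why the hypothesis $1 \in T_0$ is imposed), one can recover the distance $d_{M_1}(x, y)$ as a countable operation over evaluations at points of $T_0$.

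The main technical obstacle, and the step I would spend the most care on, is this recovery of the graph-distance from finite-dimensional data. A c\`adl\`ag function is continuous except at countably many points, so its values on a dense set $T_0$ determine it everywhere by right-continuity at continuity points; but at a jump time $t \notin T_0$ one only controls $x(t-)$ and $x(t+)$ through nearby points of $T_0$, not the value $x(t)$ itself. For the completed graph $\Gamma_x$, however, the filled-in segment $[x(t-),x(t)]$ at a jump is squeezed between left and right limits, and these limits are exactly $\lim_{s \uparrow t, s \in T_0} x(s)$ and $\lim_{s \downarrow t, s \in T_0} x(s)$; the terminal condition $1 \in T_0$ handles the right endpoint. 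Thus $\Gamma_x$ is a measurable function of $(x(t))_{t \in T_0}$, and hence so is $d_{M_1}(x,y)$ via the Hausdorff-metric formula. I would make this precise by writing the Hausdorff distance between $\Gamma_x$ and a fixed $\Gamma_{x_0}$ as a double sup--inf over $T_0$-rational grid points approximating the graphs, yielding a $\sigma(\mathcal{F}(T_0))$-measurable function of $x$.

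Finally, once $x \mapsto d_{M_1}(x, x_0)$ is shown to be $\sigma(\mathcal{F}(T_0))$-measurable for each $x_0$ in a countable $M_1$-dense set $\{x_0^{(m)}\}$, every open ball $\{x : d_{M_1}(x, x_0^{(m)}) < q\}$ with $q \in \mathbb{Q}_{>0}$ lies in $\sigma(\mathcal{F}(T_0))$; since these balls form a countable base for the separable $M_1$ topology, they generate $\mathcal{D}_{M_1}$, giving $\mathcal{D}_{M_1} \subseteq \sigma(\mathcal{F}(T_0))$ and completing the proof. This mirrors the classical argument that finite-dimensional cylinder sets generate the Borel $\sigma$-field on $D([0,1],\mathbb{R})$ for the $J_1$ topology (Billingsley~\cite{Bi68}, Chapter 3); the only genuine change is replacing $J_1$-specific evaluation arguments with the $M_1$ graph-convergence characterization, which is where the density of $T_0$ and the inclusion $1 \in T_0$ enter.
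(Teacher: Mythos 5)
There is a genuine gap, and it sits exactly at the step you flag as the technical core. Your argument hinges on the claim that ``$M_1$ convergence is equivalent to convergence of the completed graphs in the Hausdorff sense.'' That is false: Hausdorff convergence of completed graphs is the definition of the \emph{$M_2$} topology (see the metric $d_{M_2}$ recalled in Section~\ref{S:M2top}), and $M_2$ is strictly coarser than $M_1$. For instance, $x_n = 1_{[1/2,1]} - \tfrac12\, 1_{[1/2+1/n,\,1/2+2/n)}$ has completed graphs converging in the Hausdorff metric to the completed graph of $x=1_{[1/2,1]}$ (the dip to $1/2$ occurs at times tending to $1/2$ with values inside the jump range $[0,1]$), yet $x_n \not\to x$ in $M_1$, because the $M_1$ parametric representations must traverse the jump monotonically and cannot absorb the oscillation $1 \to 1/2 \to 1$. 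Consequently, your graph-reconstruction step (which is correct as far as it goes: $\Gamma_x$ \emph{is} determined measurably by $(x(t))_{t\in T_0}$ when $T_0$ is dense and contains $1$) establishes $\sigma(\mathcal{F}(T_0))$-measurability of $x \mapsto d_{M_2}(x,x_0)$, not of $x \mapsto d_{M_1}(x,x_0)$. What you then obtain is $\mathcal{D}_{M_{2}} \subseteq \sigma(\mathcal{F}(T_{0})) \subseteq \mathcal{D}_{M_{1}}$, and the inclusion you actually need, $\mathcal{D}_{M_{1}} \subseteq \sigma(\mathcal{F}(T_{0}))$, is left unproven; closing it is essentially as hard as the lemma itself, since $d_{M_1}$ is an infimum over parametric representations and admits no Hausdorff-type coordinate formula. (A smaller issue: in your easy direction, ``continuous off a measurable set'' does not by itself imply Borel measurability of $\pi_T$; the correct argument is Billingsley's approximation of $\pi_t$ by the $M_1$-continuous integral averages $x \mapsto \varepsilon^{-1}\int_t^{t+\varepsilon}x(s)\,\rmd s$, using Lemma 12.5.1 in Whitt~\cite{Whitt02}.)

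The paper's proof avoids metric reconstruction entirely by a sandwich through the $J_1$ theory: since the $M_1$ topology is weaker than $J_1$, one has $\mathcal{D}_{M_{1}} \subseteq \mathcal{D}_{J_{1}}$; Theorem 14.5 in Billingsley~\cite{Bi68} gives $\sigma(\mathcal{F}(T_{0})) = \mathcal{D}_{J_{1}}$; and the measurability of the projections $\pi_T$ with respect to $\mathcal{D}_{M_{1}}$ (your easy direction, properly justified) gives $\sigma(\mathcal{F}(T_{0})) \subseteq \mathcal{D}_{M_{1}}$. Hence
$$\mathcal{D}_{J_{1}} = \sigma(\mathcal{F}(T_{0})) \subseteq \mathcal{D}_{M_{1}} \subseteq \mathcal{D}_{J_{1}},$$
forcing equality throughout. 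If you want to salvage your direct route, the natural repair is precisely this: keep your forward inclusion, but replace the Hausdorff-metric step by the comparison with $\mathcal{D}_{J_{1}}$ and the known $J_1$ result, since no analogue of your coordinate formula is available for the $M_1$ metric.
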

\begin{proof}
Since the $M_{1}$ topology is weaker than Skorohod $J_{1}$ topology (see Theorem 12.3.2 in Whitt~\cite{Whitt02}), it holds that $\mathcal{D}_{M_{1}} \subseteq \mathcal{D}_{J_{1}}$. By Theorem 14.5 in Billingsley~\cite{Bi68} $\mathcal{F}(T_{0})$ generates $\mathcal{D}_{J_{1}}$. Using the fact that $M_{1}$ convergence implies local uniform convergence at all continuity points (see Lemma 12.5.1 in Whitt~\cite{Whitt02}), similar to the procedure in Billingsley~\cite{Bi68} for the $J_{1}$ topology, we obtain  that $\pi_{T}$ is measurable with respect to $\mathcal{D}_{M_{1}}$, and hence $\sigma( \mathcal{F}(T_{0})) \subseteq \mathcal{D}_{M_{1}}$. Finally we have
$$\mathcal{D}_{J_{1}} = \sigma (\mathcal{F}(T_{0})) \subseteq \mathcal{D}_{M_{1}} \subseteq \mathcal{D}_{J_{1}},$$
i.e. $\sigma (\mathcal{F}(T_{0})) = \mathcal{D}_{M_{1}}$.
\end{proof}

In the proof of our main result in the next section we will need the following result, which state that weak convergence with respect to the weak $M_{2}$ topology implies convergence of finite-dimensional distributions.

\begin{lem}\label{l:M2conv}
If $(Y_{n}, Q_{n}) \dto (Y,Q)$ in $D([0,1], \mathbb{R}^{d})$ equipped with the weak $M_{2}$ topology, then (\ref{e:findimconv}) holds.
\end{lem}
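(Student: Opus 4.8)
The plan is to derive the finite-dimensional convergence (\ref{e:findimconv}) from weak $M_{2}$ convergence by the continuous mapping theorem, applied to the projection $\pi_{T,T}$ associated with the time points $T=\{t_{1},\ldots,t_{k}\}\subseteq T_{Y}\cap T_{Q}$. As in the proof of Lemma~\ref{l:contprojection}, it then suffices to show that $\pi_{T,T}$ is almost surely continuous with respect to the law of $(Y,Q)$, where now continuity is understood relative to the weak $M_{2}$ topology rather than the $M_{1}$ topology.

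The essential step is to show that weak $M_{2}$ convergence forces pointwise convergence at continuity points: if $(x_{n})\to x$ in the $M_{2}$ topology in one coordinate and $t$ is a continuity point of $x$, then $x_{n}(t)\to x(t)$. I would argue this directly from the Hausdorff convergence of completed graphs. Since $(t,x_{n}(t))\in\Gamma_{x_{n}}$, the definition of $d_{M_{2}}$ gives $\inf_{b\in\Gamma_{x}}d((t,x_{n}(t)),b)\le d_{M_{2}}(x_{n},x)\to 0$, so I may choose $(s_{n},z_{n})\in\Gamma_{x}$ with $|t-s_{n}|\vee|x_{n}(t)-z_{n}|\to 0$. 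Thus $s_{n}\to t$, while $z_{n}\in[x(s_{n}-),x(s_{n})]$; continuity of $x$ at $t$ makes both endpoints tend to $x(t)$, whence $z_{n}\to x(t)$ and therefore $x_{n}(t)\to x(t)$. The same reasoning applies to every coordinate, so weak $M_{2}$ convergence of $(Y_{n},Q_{n})$ to $(Y,Q)$ yields convergence of all coordinate values at any common continuity point.

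Consequently $\pi_{T,T}$ is continuous at every $z=(x,y)$ whose points of $T$ are all continuity points of both $x$ and $y$, so $z\in\textrm{Disc}(\pi_{T,T})$ forces some $t\in T$ to lie in $\textrm{Disc}(z)$. Since $T\subseteq T_{Y}\cap T_{Q}$, exactly as in Lemma~\ref{l:contprojection} I obtain
$$ \Pr[(Y,Q)\in\textrm{Disc}(\pi_{T,T})]\le\sum_{t\in T}\Pr[t\in\textrm{Disc}((Y,Q))]=0,$$
so $\pi_{T,T}$ is a.s. continuous with respect to the distribution of $(Y,Q)$. The continuous mapping theorem then gives $\pi_{T,T}(Y_{n},Q_{n})\dto\pi_{T,T}(Y,Q)$, which is precisely (\ref{e:findimconv}).

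I expect the pointwise-convergence-at-continuity-points step to be the main obstacle. For the $M_{1}$ topology one simply quotes Whitt's Lemma 12.5.1 on local uniform convergence at continuity points, but the weak $M_{2}$ topology is genuinely weaker and offers no such ready-made statement, so the convergence must be extracted by hand from the Hausdorff distance between graphs. The delicate point is guaranteeing that the matched graph point $(s_{n},z_{n})\in\Gamma_{x}$ has spatial coordinate tending to $x(t)$, and this is exactly where the continuity of $x$ at $t$ enters in an indispensable way.
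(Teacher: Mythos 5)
Your proposal is correct and follows essentially the same route as the paper: almost-sure continuity of the projection $\pi_{T,T}$ (via the set inclusion $\textrm{Disc}(\pi_{T,T})\subseteq\bigcup_{t\in T}\{z: t\in\textrm{Disc}(z)\}$ and $T\subseteq T_{Y}\cap T_{Q}$) followed by the continuous mapping theorem. The only difference is that where the paper merely says the continuity step is obtained ``similar as in the proof of Lemma~\ref{l:contprojection}'', you supply the missing detail explicitly, proving by hand from the Hausdorff metric on completed graphs that $M_{2}$ convergence forces pointwise convergence at continuity points of the limit --- a correct and welcome elaboration of exactly the point the paper leaves implicit.
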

\begin{proof}
Take arbitrary $k \in \mathbb{N}$ and $T = \{t_{1}, \ldots, t_{k} \} \subseteq T_{Y} \cap T_{Q}$.
Similar as in the proof of Lemma~\ref{l:contprojection} we obtain
$$ \Pr [(Y,Q) \in \textrm{Disc}(\pi_{T,T}) ] =0.$$
An application of the continuous mapping theorem yields
$$ \pi_{T,T}(Y_{n},Q_{n}) \to \pi_{T,T}(Y,Q),$$
and hence (\ref{e:findimconv}) holds.
\end{proof}

\section{Main result}
\label{S:Mainresult}

Let $(X_{i})$ be a sequence of linear processes
$$ X_{i} = \sum_{j=-\infty}^{\infty}\varphi_{j}Z_{i-j}, \qquad i \in \mathbb{Z},$$
with regularly varying innovations $Z_{i}$ with index $\alpha \in (0,2)$, and coefficients $\varphi_{j}$ satisfying (\ref{e:asconv}). The theorem below shows that the joint partial sum and maxima
processes $L_{n}(\,\cdot\,)$ from (\ref{e:convM2}) satisfy a functional limit theorem in the weak $M_{1}$ topology with the limit consisting of an $\alpha$--stable L\'{e}vy process and an extremal process.
Put $ \beta = \sum_{i=-\infty}^{\infty}\varphi_{i}$ and $\gamma = \max\{|\varphi_{i}| : i \in \mathbb{Z}\}$.

Recall some basic facts on L\'{e}vy processes and extremal processes. The distribution of a L\'{e}vy process $V (\,\cdot\,)$ is characterized by its
characteristic triple, i.e. the triple
$(a, \nu', b)$ such that
 $$
  \mathrm{E} [e^{izV(1)}] = \exp \biggl( -\frac{1}{2}az^{2} + ibz + \int_{\mathbb{R}} \bigl( e^{izx}-1-izx 1_{[-1,1]}(x) \bigr)\,\nu'(\rmd x) \biggr)$$
for $z \in \mathbb{R}$, where $a \ge 0$, $b \in \mathbb{R}$ are constants, and $\nu'$ is a measure on $\mathbb{R}$ satisfying
$$ \nu' ( \{0\})=0 \qquad \text{and} \qquad \int_{\mathbb{R}}(|x|^{2} \wedge 1)\,\nu'(\rmd x) < \infty.$$
 We refer to Sato~\cite{Sa99} for a textbook treatment of
L\'{e}vy processes.
The distribution of a nonnegative extremal process $W(\,\cdot\,)$ is characterized by its exponent measure $\nu''$ in the following way:
$$ \PP (W(t) \leq x ) = e^{-t \nu''(x,\infty)}$$
for $t>0$ and $x>0$, where $\nu''$ is a measure on $(0,\infty)$ satisfying
$ \nu'' (\delta, \infty) < \infty$
for any $\delta >0$ (see Resnick~\cite{Resnick07}, page 161).

\begin{thm}\label{t:InfMA}
Let $(Z_{i})_{i \in \mathbb{Z}}$ be an i.i.d. sequence of regularly varying random variables with index $\alpha \in (0,2)$. When $\alpha=1$, suppose further that $Z_{1}$ is symmetric. Let $(\varphi_{i})_{i \in \mathbb{Z}}$ be a sequence of real numbers satisfying (\ref{e:asconv}) and assume all of them are of the same sign. If the coefficients $\varphi_{j}$ are non-negative assume also $p>0$, and if they are non-positive assume $r>0$, with $p$ and $r$ as given in (\ref{eq:pq}).
Then, as $n \to \infty$,
$$ L_{n}(\,\cdot\,) := (V_{n}(\,\cdot\,), W_{n}(\,\cdot\,)) \dto (\beta V(\,\cdot\,), \gamma W(\,\cdot\,))$$
in $D([0,1], \mathbb{R}^{2})$ endowed with the weak $M_{1}$ topology, where $V$ is an $\alpha$--stable L\'{e}vy process with characteristic triple $(0,\mu,b)$, with $\mu$ as in $(\ref{eq:mu})$ and
$$ b = \left\{ \begin{array}{cc}
                                   0, & \quad \alpha = 1\\[0.4em]
                                   (p-r)\frac{\alpha}{1-\alpha}, & \quad \alpha \in (0,1) \cup (1,2)
                                 \end{array}\right.,$$
and $W$ is an extremal process with exponent measure
$$\nu(dx)= c\alpha x^{-\alpha-1}1_{(0,\infty)}(x)\,dx,$$
where
$$ c = \left\{ \begin{array}{cc}
                                   p, & \quad \min \{\varphi_{j} : j=0,\ldots, q\} \geq 0\\[0.4em]
                                   r, & \quad \max \{\varphi_{j} : j=0,\ldots,q\} \leq 0
                                 \end{array}\right..$$
\end{thm}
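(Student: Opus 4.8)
The plan is to verify the two hypotheses of Theorem~\ref{t:M1weak} for the pair $(V_{n},W_{n})$ and the candidate limit $(\beta V,\gamma W)$. The finite-dimensional convergence~\eqref{e:findimconv} comes essentially for free: the weak $M_{2}$ convergence~\eqref{e:convM2}, already available from Krizmani\'{c}~\cite{Kr17-2}, together with Lemma~\ref{l:M2conv} applied to $(Y_{n},Q_{n})=(V_{n},W_{n})$ yields exactly the required convergence of finite-dimensional distributions along time points in $T_{\beta V}\cap T_{\gamma W}$. For the $\alpha$-stable L\'{e}vy process and the extremal process every fixed $t\in(0,1]$ is almost surely a continuity point, so $T_{\beta V}\cap T_{\gamma W}=(0,1]$ and no genuine restriction on the time points arises. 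Thus the whole content of the theorem reduces to showing that the marginal sequences $(V_{n})$ and $(W_{n})$ are each tight with respect to the one-dimensional $M_{1}$ topology.

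Tightness of $(W_{n})$ is the easy half. For every $n$ the path $t\mapsto W_{n}(t)=a_{n}^{-1}\bigvee_{i=1}^{\floor{nt}}X_{i}$ is nondecreasing, and for a monotone real function the middle value $x(t_{2})$ always lies in the segment $[x(t_{1}),x(t_{3})]$ whenever $t_{1}\le t_{2}\le t_{3}$; hence Whitt's $M_{1}$ oscillation function vanishes identically on $(W_{n})$. By the $M_{1}$ tightness criterion (Whitt~\cite{Whitt02}, Theorem~12.12.3) it then suffices to control the sup-norm and the oscillation at the endpoints $0$ and $1$, and both follow from the one-dimensional marginal convergence $W_{n}\dto\gamma W$ contained in~\eqref{e:convM2}. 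I would therefore record $(W_{n})$ as $M_{1}$-tight with little extra work.

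The real content is the $M_{1}$-tightness of the partial sum process $(V_{n})$, and this is where the same-sign hypothesis is indispensable. I would follow the standard big-jump and truncation scheme. First approximate $X_{i}$ by the finite moving average $X_{i}^{(q)}=\sum_{|j|\le q}\varphi_{j}Z_{i-j}$, writing $V_{n}=V_{n}^{(q)}+R_{n}^{(q)}$ with matching centering; using~\eqref{e:asconv} and a maximal inequality for the summable tail $\sum_{|j|>q}|\varphi_{j}|$ one shows $\lim_{q\to\infty}\limsup_{n}\Pr(\|R_{n}^{(q)}\|_{[0,1]}>\eta)=0$, so it is enough to prove tightness of $V_{n}^{(q)}$ for each fixed $q$. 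For the finite moving average a single innovation $Z_{k}$ influences only the $2q+1$ consecutive partial-sum increments indexed by $i\in\{k-q,\dots,k+q\}$, and because all $\varphi_{j}$ share one sign, every one of these increments carries the sign of $Z_{k}$; hence the excursion that one large innovation injects into $V_{n}^{(q)}$ over a time window of width $(2q+1)/n\to0$ is monotone. Consequently an isolated large innovation contributes nothing to the $M_{1}$ oscillation function $w_{s}(V_{n}^{(q)},\delta)$, and the only way to create oscillation of size $\ge\eps$ inside a window of length $\delta$ is to have two large innovations of opposite effective sign falling in that window. I would bound the probability of such a coincidence by the usual Poissonian estimate for the rescaled innovations, which is of order $\delta$ uniformly in $n$, while truncating the innovations below level $u\,a_{n}$ and letting $u\to0$ disposes of the aggregated small-innovation part, whose limit is the continuous drift of the stable process and therefore carries no $M_{1}$ oscillation. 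Combining these pieces gives $\lim_{\delta\to0}\limsup_{n}\Pr(w_{s}(V_{n}^{(q)},\delta)\ge\eps)=0$ together with the sup-norm and endpoint controls, which is Whitt's criterion for $M_{1}$-tightness.

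The main obstacle is precisely this oscillation estimate for $(V_{n})$: one must quantify ``nearly monotone'' rigorously, since the monotone excursion produced by one large innovation is perturbed both by the many small innovations overlapping the same window and by the truncation remainder $R_{n}^{(q)}$, and one must show these perturbations cannot inflate the $M_{1}$ oscillation beyond $\eps$ except on an event of vanishing probability. This is exactly the point at which the argument would fail for coefficients of mixed sign: there the increments $\varphi_{j}Z_{k}$ alternate in sign, a single large innovation already produces a genuinely non-monotone excursion, the middle value escapes the segment $[x(t_{1}),x(t_{3})]$ by a macroscopic amount, and $w_{s}(V_{n},\delta)$ fails to vanish, which is why in the general case only the weaker $M_{2}$ topology, whose oscillation function ignores such overshoots, can be reached. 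Once the oscillation bound is in place, Theorem~\ref{t:M1weak} delivers the joint weak $M_{1}$ convergence and the proof is complete.
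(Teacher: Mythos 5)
Your overall architecture is the same as the paper's: finite-dimensional convergence extracted from the weak $M_{2}$ result \eqref{e:convM2} via Lemma~\ref{l:M2conv}, marginal $M_{1}$-tightness of $(V_{n})$ and $(W_{n})$, and then Theorem~\ref{t:M1weak} to assemble the joint convergence. Where you diverge is in how tightness is obtained, and here the paper is far more economical: since $D([0,1],\mathbb{R})$ with the $M_{1}$ topology is Polish, tightness of a marginal sequence follows from the \emph{necessity} direction of Theorem~\ref{t:Whitt} once one knows that the sequence converges in distribution in that topology, and both one-dimensional convergences are already available in the literature --- $V_{n} \dto \beta V$ is Corollary 1 of Tyran-Kami\'{n}ska~\cite{Ty10b} (this is exactly where the same-sign hypothesis enters), while $W_{n} \dto \gamma W$ follows from Proposition 4.1 of Basrak and Tafro~\cite{BaTa16} applied to finite-order truncations plus negligibility of the approximation error as in~\cite{Kr17-2}. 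You instead set out to prove tightness by hand. For $(W_{n})$ your argument is correct and arguably cleaner than the paper's route: monotone paths have identically vanishing $M_{1}$ oscillation, and the sup-norm control comes from convergence of $W_{n}(1)$.

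For $(V_{n})$, however, your plan amounts to re-deriving Tyran-Kami\'{n}ska's theorem (essentially the Avram--Taqqu argument): finite-order approximation, big-jump decomposition, the observation that a single large innovation produces a monotone excursion when all coefficients share one sign, a Poissonian bound for two large innovations landing in a $\delta$-window, and maximal-inequality control of the small-innovation part and of the remainder $R_{n}^{(q)}$. These are the right ideas, but, as you yourself concede, the quantitative oscillation estimate --- showing that the perturbations of the monotone excursion by the overlapping small innovations and by $R_{n}^{(q)}$ cannot inflate $w_{s}(V_{n},\delta)$ beyond $\eps$ --- is flagged as ``the main obstacle'' rather than carried out, and the sup-norm negligibility of $R_{n}^{(q)}$ for $\alpha \in [1,2)$ requires a truncation/martingale argument that you only gesture at. So as written, this half of your proof is a program, not a proof. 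The gap is entirely avoidable: either complete those estimates, or do what the paper does and observe that all of this hard work is already packaged in \cite{Ty10b}, so that Polishness of the $M_{1}$ topology hands you tightness of $(V_{n})$ for free.
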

\begin{proof}
By Theorem 4.1 in Krizmani\'{c}~\cite{Kr17-2}, $L_{n}$ converges in distribution to $(\beta V, \gamma W)$ in $D([0,1], \mathbb{R}^{2})$ equipped with the weak $M_{2}$ topology. Then by Lemma~\ref{l:M2conv} we obtain finite-dimensional convergence of $L_{n}$ toward $(\beta V, \gamma W)$.

It is well known that $V_{n} \dto \beta V$ in $D([0,1], \mathbb{R})$ equipped with the $M_{1}$ topology, see for instance Corollary 1 in Tyran-Kami\'{n}ska~\cite{Ty10b}. By Theorem~\ref{t:Whitt} the sequence $(V_{n})$ is tight.

As for the process $W_{n}$, we first approximate it by a sequence of finite order linear processes as in Krizmani\'{c}~\cite{Kr17-2}. For these processes, by Proposition 4.1 in Basrak and Tafro~\cite{BaTa16} we obtain convergence with respect to the $M_{1}$ topology, and then we show that the error of approximation is negligible in the limit (see the proof of Theorem 4.1 in Krizmanic~\cite{Kr17-2} for details). Hence $W_{n} \dto \gamma W$ in $D([0,1], \mathbb{R})$ equipped with the $M_{1}$ topology, and hence by Theorem~\ref{t:Whitt} the sequence $(W_{n})$ is also tight.
An application of Theorem~\ref{t:M1weak} concludes the proof.
\end{proof}

\section*{Acknowledgements}
This work has been supported in part by Croatian Science Foundation under the project 3526 and by University of Rijeka research grant 13.14.1.2.02.


\end{document}